\theoremstyle{plain}
\newtheorem{theorem}{Theorem}
\newtheorem{proposition}[theorem]{Proposition}
\theoremstyle{definition}
\newtheorem{definition}[theorem]{Definition}
\newtheorem{remark}[theorem]{Remark}
\newcommand{\todo}[1]{\vspace{5 mm}\par \noindent
\marginpar{\textsc{ToDo}}
\framebox{\begin{minipage}[c]{0.95 \textwidth}
\tt #1 \end{minipage}}\vspace{5 mm}\par}
\renewcommand{\todo}[1]{}
\newcommand{\idiot}[1]{\vspace{5 mm}\par \noindent
\framebox{\begin{minipage}[c]{0.95 \textwidth}
\tt #1 \end{minipage}}\vspace{5 mm}\par}
\renewcommand{\idiot}[1]{}
\newcommand{\xs}{x_1,\ldots,x_n}                
\newcommand{\depth}{{\rm{depth}}\ }             
\newcommand{\dimn}{ {\rm{dim}} \ }              
\newcommand{\F}{{\mathcal{F}}}                  
\newcommand{\N}{{\mathcal{N}}}                  
\newcommand{\D}{\Delta}                         
\newcommand{\tuple}[1]{\langle #1 \rangle}      
\newcommand{\st}{\ | \ }
\newcommand{\Fc}{\mbox{Facets}}               
\newcommand{\height}{\mbox{height }}               
\title{The projective dimension of sequentially Cohen-Macaulay monomial ideals} \author{Sara Faridi\thanks{Department of
Mathematics and Statistics, Dalhousie University, Halifax, Canada, 
faridi@mathstat.dal.ca, +1(902)-494-2658. 
Research supported by NSERC.}} 
\begin{document}

\maketitle


\emph{Since posting this paper we have found that the same result
  regarding projective dimension of square-free monomial ideals
  appears in the paper of Morey and Villarreal~\cite{MV}.}

\bigskip 

In this short note we prove that the projective dimension of a
sequentially Cohen-Macaulay square-free monomial ideal is equal to the
maximal height of its minimal primes (also known as the big height),
or equivalently, the maximal cardinality of a minimal vertex cover of
its facet complex. Along the way we also give bounds for the depth and
the dimension of any monomial ideal. This in particular gives a
formula for the projective dimension of facet ideals of these classes
of ideals, which are known to be sequentially Cohen-Macaulay: graph
trees and simplicial trees and forests~\cite{F1}, chordal graphs and
some cycles~\cite{FV}, chordal clutters and graphs~\cite{W}, some path
ideals~\cite{SKT} to mention a few.  Since polarization preserves
projective dimension, our result also gives the projective dimension
of any sequentially Cohen-Macaulay monomial ideal.  Our result is a
precise and simple description of the projective dimension that is not
dependent on the ground field.

There has been much activity surrounding combinatorial
characterizations of the projective dimension, see for
example~\cite{C,DHS,DS,Ku,LM}. The authors in~\cite{KhM} and~\cite{DS}
in particular prove the same result for graphs that are forests and
some other classes of graphs.

A {\bf simplicial complex} $\Delta$ over a set of vertices $V$ is a
set of subsets of $V$ with the property that if $F \in \Delta$ then
all subsets of $F$ are also in $\Delta$. An element of $\Delta$ is
called a {\bf face}, the {\bf dimension} of a face $F$ is $|F| -1$,
and the dimension of $\D$ is the largest dimension of a face of $\D$.
The maximal faces of $\Delta$ under inclusion are called {\bf facets},
and the set of facets of $\D$ is denoted by $\Fc(\D)$. If
$\Fc(\D)=\{F_1,\ldots,F_q\}$ we write $\D=\tuple{F_1,\ldots,F_q}$.

 Let $k$ be any field.  To a square-free monomial ideal $I$ in a
 polynomial ring $R=k[\xs]$ one can associate two unique simplicial
 complexes $\N(I)$ and $\F(I)$ on the vertex set labeled
 $\{\xs\}$. Conversely given a simplicial complex $\D$ with vertices
 labeled $\xs$ one can associate two unique square-free monomials
 $\N(\D)$ and $\F(\D)$ in the polynomial ring $k[\xs]$; these are all
 defined below.

 \bigskip
\begin{tabular}{ll}
{\bf Facet complex} of $I$  &$\F(I)=
  \tuple{\{x_{a_1},\ldots,x_{a_m}\} \st x_{a_1}\ldots x_{a_m} \mbox{
      minimal generator of }I}$\\
{\bf Stanley-Reisner complex} of $I$  &$\N(I)= \{\{x_{a_1},\ldots,x_{a_m}\} \st x_{a_1}\ldots x_{a_m} \notin I\}.$\\
 {\bf Facet ideal} of $\D$ &$\F(\D)= (x_{a_1}\ldots x_{a_m} \st \{x_{a_1},\ldots,x_{a_m}\} \in \Fc(\D))$\\
 {\bf Stanley-Reisner ideal} of $\D$ &$\N(\D)=(x_{a_1}\ldots x_{a_m} \st \{x_{a_1},\ldots,x_{a_m}\} \notin \D).$
\end{tabular}
 \bigskip

Sequentially Cohen-Macaulay ideals were  introduced by
Stanley in relation to  the concept of nonpure shellability by
Bj\"{o}rner and Wachs [BW]. 

\begin{definition}[{[S]} Chapter III, Definition~2.9]\label{scm-def} 
  Let $M$ be a finitely generated ${\mathbb{Z}}$-graded module over a
  finitely generated ${\mathbb{N}}$-graded $k$-algebra, with $R_0=k$.
  We say that $M$ is {\bf sequentially Cohen-Macaulay} if there
  exists a finite filtration
  $$0=M_0 \subseteq M_1 \subseteq \ldots \subseteq M_r=M$$
  of $M$ by
  graded submodules $M_i$ satisfying the following two conditions.
\begin{enumerate}
\item[(a)] Each quotient $M_i/M_{i-1}$ is Cohen-Macaulay.
\item[(b)] $\dimn (M_1/M_0) < \dimn (M_2/M_1) < \ldots < \dimn (M_r/M_{r-1})$, 
where $\dimn$ denotes Krull dimension.
\end{enumerate}
\end{definition}

Our goal is to find the projective dimension of sequentially Cohen-Macaulay 
square-free monomial ideal. In this setting Duval gave 
an equivalent characterization in terms of simplicial complexes.

Given a simplicial complex $\D$  and an integer $i$ let
\begin{itemize}
\item $\D^i=\{F\in \D \st \dim F \leq i\}$ =$i$-skeleton of $\D$, 
\item $\D_i=\tuple{F \in \D \st \dim F=i}$ =pure $i$-skeleton of $\D$.  
\end{itemize}

\begin{theorem}[{[D]  Theorem~3.3}]\label{t:scm} Let $I$ be
  square-free monomial ideal $I$ in a polynomial ring $R$ over a field
  $k$, and let $\D=\N(I)$. Then $R/I$ is sequentially Cohen-Macaulay
  if and only if $R/\N(\D_i)$ is Cohen-Macaulay for all $i \in \{-1,\ldots,
  \dimn \D\}$.
\end{theorem}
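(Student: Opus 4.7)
My plan is to prove both directions of the equivalence using Stanley's filtration definition, with the combinatorial backbone being the facet-by-facet primary decomposition of $I$. Let $d = \dimn \D$, so that $R/I = k[\D]$ has Krull dimension $d+1$, and recall that $I = \bigcap_{F \in \Fc(\D)} P_F$ where $P_F = (x_j : x_j \notin F)$ and $\dimn R/P_F = \dim F + 1$.

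For the direction $(\Leftarrow)$, I would group the primes in this decomposition by the dimension of the facet: for $-1 \leq i \leq d$ set
\[
Q_i \; = \; \bigcap_{F \in \Fc(\D),\ \dim F \leq i} P_F,
\]
with the conventions $Q_{-1} = R$ and $Q_d = I$. This yields a descending chain of ideals $R = Q_{-1} \supseteq Q_0 \supseteq \ldots \supseteq Q_d = I$ and hence an ascending filtration
\[
0 \; = \; Q_d/I \; \subseteq \; Q_{d-1}/I \; \subseteq \; \ldots \; \subseteq \; Q_{-1}/I \; = \; R/I
\]
of submodules of $R/I$. The main technical claim is that each nonzero successive quotient $Q_{i-1}/Q_i$ is Cohen-Macaulay of Krull dimension $i+1$; the dimension is automatic since the associated primes of $Q_{i-1}/Q_i$ lie in $\{P_F : \dim F = i\}$, and Cohen-Macaulayness is where the assumption that $k[\D_i]$ is Cohen-Macaulay enters. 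Stanley's Definition~\ref{scm-def} is then verified after collapsing repeated terms.

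For the direction $(\Rightarrow)$, I would invoke Schenzel's criterion: a module $M$ is sequentially Cohen-Macaulay if and only if its dimension filtration (by largest graded submodules of each Krull dimension) has Cohen-Macaulay quotients. For Stanley-Reisner rings, the dimension filtration admits an explicit description in terms of the primary decomposition above, and coincides with the filtration $\{Q_i/I\}$ constructed above. The Cohen-Macaulayness of its quotients $Q_{i-1}/Q_i$ must then be translated back to that of the pure skeleta $k[\D_i]$. The subtle point is that $Q_{i-1}/Q_i$ is not literally $k[\D_i]$: the ideal $\bigcap_{\dim F = i} P_F$ is the Stanley-Reisner ideal of the subcomplex generated only by the $i$-dimensional facets of $\D$, whereas $\D_i$ is generated by all $i$-dimensional faces, including proper faces of higher-dimensional facets. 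The bridge is the observation that the extra primes appearing in $\N(\D_i)$ are supersets of primes already present in $Q_{i+1},\ldots,Q_d$, so they contribute only terms of smaller Krull dimension that do not affect Cohen-Macaulayness.

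The principal obstacle is closing this combinatorial gap and verifying the identification with the dimension filtration. The cleanest approach is via Reisner's criterion on both sides together with Hochster's formula: the graded pieces of $H^j_\mathfrak{m}(R/I)$ decompose as sums of reduced cohomology of links in $\D$, and this decomposition can be refined into contributions from each pure skeleton. Translating the vanishing of reduced cohomology of links in $\D_i$ (i.e.\ the Reisner condition for $k[\D_i]$) into vanishing of the correct graded piece of $H^{i+1}_\mathfrak{m}(R/I)$ is the heart of the argument and would occupy the bulk of the proof.
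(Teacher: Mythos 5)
The paper does not prove this statement; it is quoted verbatim from Duval [D], whose own argument goes through algebraic shifting. So your proposal can only be judged on its own merits, and as it stands it has two genuine problems.

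First, your filtration is oriented backwards. With $Q_i=\bigcap_{\dim F\leq i}P_F$, the first nonzero quotient of the chain $0=Q_d/I\subseteq Q_{d-1}/I\subseteq\cdots\subseteq Q_{-1}/I=R/I$ is $Q_{d-1}/Q_d$, whose associated primes lie in $\{P_F : \dim F=d\}$ and which therefore has Krull dimension $d+1$; the later quotients have successively \emph{smaller} dimension, violating condition (b) of Definition~\ref{scm-def}, which demands strictly increasing dimensions. (Try $\Delta=\langle\{x,y\},\{z\}\rangle$: your filtration is $0\subseteq (x,y)/I\subseteq R/I$ with quotient dimensions $2,1$.) The correct grouping is the opposite one, $\tilde Q_i=\bigcap_{\dim F\geq i}P_F$, which does recover Schenzel's dimension filtration $D_i(R/I)=(\bigcap_{\dim R/P_F\geq i+1}P_F)/I$; your identification of $\{Q_i/I\}$ with the dimension filtration in the $(\Rightarrow)$ direction is false for the same reason.

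Second, and more seriously, the step you yourself flag as the ``principal obstacle'' is not a loose end but the entire content of the theorem: relating Cohen--Macaulayness of the filtration quotients (which live over the subcomplex generated by the facets of $\Delta$ of a fixed dimension) to Cohen--Macaulayness of the pure skeleta $k[\Delta_i]$ (which are generated by \emph{all} $i$-faces). Your proposed bridge --- that the extra primes of $\N(\Delta_i)$ ``contribute only terms of smaller Krull dimension that do not affect Cohen-Macaulayness'' --- is not correct as stated: $\N(\Delta_i)$ has \emph{no} lower-dimensional minimal primes (every $i$-face of $\Delta$ gives a prime of dimension $i+1$, so $k[\Delta_i]$ is unmixed), and in general adjoining or deleting components of smaller dimension certainly can destroy or create Cohen--Macaulayness. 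The appeal to Reisner's criterion and Hochster's formula at the end is a reasonable place to look, but no actual argument is given there. In short: the skeleton of a dimension-filtration proof is visible, but the filtration must be reversed and the core comparison between filtration quotients and pure skeleta still has to be proved.
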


The main tool we will use is a result of Fr\"oberg. For a given
simplicial complex $\D$, let 

\begin{proposition}[\cite{Fr} Theorem~8]\label{p:fr} Let $\D$ be the Stanley-Reisner complex of
I. Then $$\depth(R/I)=\max\{i \st \D^i \mbox{ is Cohen-Macaulay }\}+1.$$
\end{proposition}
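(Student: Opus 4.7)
The plan is to match both sides of the claimed equality by translating each into a vanishing condition on the reduced simplicial (co)homology of links in $\D$. The Cohen-Macaulayness of $\D^d$ will be encoded by Reisner's criterion, while the inequality $\depth(R/I) \geq d+1$ will be encoded by Hochster's formula for local cohomology; I will then verify that the two conditions describe the same vanishing.

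For the skeleton side, the key combinatorial identity is $\mathrm{lk}_{\D^d}(F) = (\mathrm{lk}_\D F)^{d-|F|}$ for every face $F$ with $|F|\leq d+1$, which follows directly from the definitions of link and skeleton. Combined with the topological fact that the $k$-skeleton of a simplicial complex has the same reduced homology as the complex itself in every degree strictly below $k$, Reisner's criterion applied to $\D^d$ becomes the assertion that $\tilde{H}_j(\mathrm{lk}_\D F; k) = 0$ for every $F \in \D$ with $|F| \leq d+1$ and every $j < d - |F|$. Faces with $|F| > d+1$ contribute only vacuous conditions, and one must use Reisner's criterion in the form valid for the possibly non-pure complex $\D^d$.

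On the depth side, $\depth(R/I) \geq d+1$ is equivalent to $H^i_\mathfrak{m}(R/I) = 0$ for all $i \leq d$. Hochster's local cohomology formula for a Stanley--Reisner ring expresses this vanishing as $\tilde{H}^{i - |F| - 1}(\mathrm{lk}_\D F; k) = 0$ for every face $F \in \D$. Since reduced cohomology agrees with reduced homology over a field, the reindexing $j = i - |F| - 1$ identifies this condition with the skeleton condition obtained above. Taking $d$ maximal then yields the stated equality. The main technical obstacle is dimensional bookkeeping near the boundary: one must treat the empty face (whose link is $\D$ itself) separately, verify the base case $\D^{-1} = \{\emptyset\}$ corresponding to depth zero, and account for facets of $\D$ of dimension below $d$, whose empty link contributes a nonzero $\tilde{H}^{-1}$ that simultaneously forces the relevant local cohomology group to be nonzero and the corresponding skeleton to fail purity, keeping the two sides of the proposed equality synchronized.
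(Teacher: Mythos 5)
The paper does not prove this statement: it is quoted directly from Fr\"oberg~\cite{Fr}, Theorem~8, so there is no internal proof to compare your argument against. Your derivation is the standard modern one and is correct in outline. The two combinatorial inputs you isolate --- the identity $\mathrm{lk}_{\D^d}(F)=(\mathrm{lk}_{\D}F)^{\,d-|F|}$ and the fact that a complex and its $d$-skeleton have the same reduced homology in degrees strictly below $d$ --- are both right, and Hochster's formula does translate $\depth(R/I)\geq d+1$ into the vanishing $\tilde{H}^{j}(\mathrm{lk}_{\D}F;k)=0$ for all $F\in\D$ and all $j<d-|F|$, with faces of cardinality exceeding $d+1$ contributing only vacuous conditions, exactly as you say. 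The one point of friction is your appeal to ``Reisner's criterion in the form valid for the possibly non-pure complex $\D^d$'': the usual statement of Reisner's criterion requires vanishing below $\dim\mathrm{lk}_{\D^d}F$, whereas your bookkeeping needs vanishing below $d-|F|$, and reconciling the two forms already requires knowing that Cohen--Macaulay complexes are pure with links of the expected dimension. You can remove this wrinkle entirely by not invoking Reisner at all: apply Hochster's formula to $k[\D^d]$ itself, observe that $\dim k[\D^d]=d+1$ whenever $d\leq\dim\D$, so that $\D^d$ is Cohen--Macaulay exactly when $H^{i}_{\mathfrak{m}}(k[\D^d])=0$ for all $i\leq d$, and then your two combinatorial facts identify this condition verbatim with the corresponding condition for $k[\D]$. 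Together with your (correct) observation that a facet of $\D$ of dimension below $d$ simultaneously produces a nonzero $\tilde{H}^{-1}$ of an empty link on the local cohomology side and destroys purity of $\D^d$ on the skeleton side, this gives the equivalence of $\depth(R/I)\geq d+1$ with the Cohen--Macaulayness of $\D^d$ for every $d\leq\dim\D$, and taking the maximum over $d$ yields the proposition.
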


\begin{theorem} Let $I$ be a monomial ideal in the polynomial ring 
$R=k[\xs]$. Suppose
  that the maximal height of an associated prime of $I$ is $d$. Then
  $$\depth(R/I)\leq n-d \mbox{ and }\mbox{pd}(R/I) \geq d.$$  
In particular, if $R/I$ is sequentially Cohen-Macaulay then 
  $$\depth(R/I)=n-d \mbox{ and }\mbox{pd}(R/I)=d.$$  
\end{theorem}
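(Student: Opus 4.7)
The inequality part should follow from standard commutative algebra and holds for any ideal. For any associated prime $\mathfrak{p}$ of $R/I$ one has $\depth(R/I) \leq \dimn R/\mathfrak{p}$; taking $\mathfrak{p}$ of height $d$ gives $\depth(R/I) \leq n-d$, and the Auslander--Buchsbaum formula converts this into $\mbox{pd}(R/I) \geq d$. No assumption on $I$ beyond the existence of an associated prime of height $d$ is needed here.

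For the equality under the sequentially Cohen--Macaulay hypothesis, my plan is first to settle the square-free case and then reduce the general monomial case to it via polarization, since polarization produces a square-free monomial ideal with the same projective dimension, the same big height, and the same sequentially Cohen--Macaulay property. Assuming $I$ is square-free and writing $\D = \N(I)$, the minimal primes of $I$ are the Stanley--Reisner primes $\mathfrak{p}_F = (x_i \st x_i \notin F)$ indexed by the facets $F$ of $\D$, with $\height \mathfrak{p}_F = n - |F|$. So $d = n - \min_F |F|$ and the smallest facet of $\D$ has dimension $i := n - d - 1$.

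The key observation is that at this value of $i$ the $i$-skeleton and the pure $i$-skeleton of $\D$ coincide: every face of $\D$ of dimension at most $i$ lies inside some facet, and every facet has dimension at least $i$, so any such face extends to an $i$-dimensional face in $\D$. Hence $\D^i = \D_i$. Duval's theorem (Theorem~\ref{t:scm}) then tells us that $R/\N(\D_i) = R/\N(\D^i)$ is Cohen--Macaulay, and Fr\"oberg's formula (Proposition~\ref{p:fr}) yields $\depth(R/I) \geq i + 1 = n - d$. Combined with the earlier inequality this forces $\depth(R/I) = n - d$, and Auslander--Buchsbaum converts this to $\mbox{pd}(R/I) = d$.

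The main obstacle, as I see it, is the little combinatorial lemma $\D^i = \D_i$ at the critical value $i = n - d - 1$; once that matching is in place, plugging Duval into Fr\"oberg essentially completes the proof. A secondary point of care is the polarization step, namely verifying that big height and sequential Cohen--Macaulayness transfer along polarization, though both are well documented in the literature.
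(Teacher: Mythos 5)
Your proof is correct, and the heart of it---polarize, identify the critical dimension $i=n-d-1$ coming from the smallest facet of $\D=\N(I)$, observe that $\D^i=\D_i$ there, and feed Duval's characterization into Fr\"oberg's depth formula before finishing with Auslander--Buchsbaum---is exactly the paper's argument for the sequentially Cohen--Macaulay case. Where you genuinely diverge is the unconditional inequality $\depth(R/I)\le n-d$: the paper derives it from the same simplicial setup, noting that for every $i>n-d-1$ the skeleton $\D^i$ fails to be pure, hence $R/\N(\D^i)$ fails to be Cohen--Macaulay, so Fr\"oberg's formula caps the depth at $n-d$; you instead invoke the standard fact that $\depth(R/I)\le \dim R/\mathfrak{p}$ for every associated prime $\mathfrak{p}$. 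Your route is more elementary and more general---it needs no monomial hypothesis, no polarization, and no simplicial machinery for that half---while the paper's route has the advantage of staying entirely inside the one skeleton picture that also drives the equality. Both arguments are complete; the one point you rightly flag as requiring a citation is that polarization preserves sequential Cohen--Macaulayness (alongside projective dimension and big height, the latter being Proposition~2.5 of~\cite{F2}), a transfer the paper itself uses but leaves implicit.
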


\begin{proof}
If $I$ is a monomial ideal it has the same projective dimension as its
polarization, which is square-free. Moreover by Proposition~2.5
of~\cite{F2} the maximal height of an associated prime of $I$ is the
same as the maximal height of an associated prime of its polarization,
so we only consider square-free monomial ideals.

Suppose $I$ is a square-free monomial ideal in $R=k[\xs]$ with primary
decomposition written as $$I=(p_1\cap\cdots \cap p_{a_1}) \cap \cdots
\cap (p_{a_{s-1}+1} \cap \cdots \cap p_{a_s})$$ where
\begin{itemize}
\item $\height p_{a_{i-1}+1}= \cdots = \height p_{a_i}=d_i$ for $i\in\{1,\ldots,s\}$, assuming $a_0=0$, and 
\item $d_1 < d_2 < \ldots <d_s$.
\end{itemize}

We know then that $\dimn (R/I)=n-d_1$. We wish to prove that if $R/I$
is sequentially Cohen-Macaulay, then $\depth (R/I)=n-d_s$.

Note that since $I$ is a square-free monomial ideal, each of the $p_i$ is
generated by a subset of $\{\xs\}$ and the generating set of each
$p_i$ corresponds uniquely to a minimal vertex cover of $\F(I)$; that
is, a set of vertices $A$ such that every facet of $\F(I)$ contains
one of the elements of this set, and no proper subset of $A$ has this
property.

Suppose $\D$ is the Stanley-Reisner complex of $I$. Then by the
Proposition~2.4 of~\cite{F1} the facets of are complements of the
$p_i$ and therefore $\D$ have dimensions $$n-d_1-1>\ldots >n-d_s-1.$$

Now consider $\D^i$.  If $ i>n-d_s-1$, then $\D^i$ will have facets of
dimension greater than $n-d_s-1$ as well as facets of dimension
$n-d_s-1$. So the largest $i$ where $\D^i$ is pure is $n-d_s-1$. Since
$\D^i$ being pure is a necessary condition for the Cohen-Macaulayness
of $R/\N(\D^i)$, we already know that $R/\N(\D^i)$ is not
Cohen-Macaulay for $i>n-d_s-1$. It follows immediately from
Proposition~\ref{p:fr} that $$\depth(R/I) \leq n-d_s.$$ Now by
applying the Auslander-Buchsbaum formula~\cite{AB} we have
$$\mbox{pd}(R/I)=n-\depth(R/I) \geq d_s.$$

Also notice that $\D^{n-d_s-1}=\D_{n-d_s-1}$, so if $I$ is
sequentially Cohen-Macaulay by Theorem~\ref{t:scm} we know
$R/\N(\D_{n-d_s})$ is Cohen-Macaulay. It follows that
$\depth(R/I)=n-d_s$.

The Auslander-Buchsbaum formula~\cite{AB} gives
$$\mbox{pd}(R/I)=n-\depth(R/I)=d_s.$$
\end{proof}

\begin{remark} It is  natural to expect that the same statement holds 
for any sequentially Cohen-Macaulay module, since their primary
components behave in the same way as described in the proof above; see
the appendix and in particular Theorem~A4 of~\cite{F2} for more
details of primary decomposition of sequentially Cohen-Macaulay
modules.  Indeed, after showing our result to J\"urgen Herzog he
pointed out that this follows from his joint work in~\cite{HP}.
\end{remark}

\idiot{Dear Sara, greetings from Osaka. This is a nice observation and
indeed gives the projective dimension of quite lot of classes of
monomial ideals. If you have a short proof of this, then this would
be certainly nice.
On the other hand your observation is true for any sequentially
Cohen-Macaulay module as follows from Prop.2.5 and Cor, 2.6 of my
paper with Popescu attached to this e-mail.

Sara: See also BP 3.3.10}



\begin{thebibliography}{1234}
  

\bibitem[AB]{AB} M. Auslander, D. A. Buchsbaum, \emph{Homological
  dimension in local rings}, Transactions of the American Mathematical
  Society 85: 390–405 (1957).



\bibitem[BW]{BW} A. Bj\"{o}rner, M.L. Wachs,  \emph{Shellable nonpure
    complexes and posets}, I.  Trans. Amer. Math. Soc.  348 (1996),
  no. 4, 1299--1327.

\bibitem[C]{C} D. Cook II, \emph{The uniform face ideals of a
  simplicial complex}, arXiv:1308.1299, (2013).

\bibitem[DHS]{DHS} H. Dao, C. Huneke, J. Schweig, \emph{Bounds on the
  regularity and projective dimension of ideals associated to
  graphs}, J. Algebraic Combin. 38, no. 1, 37-55 (2013).

\bibitem[DS]{DS} H. Dao, J. Schweig, \emph{Projective dimension,
  graph domination parameters, and independence complex homology,}
J. Combin. Theory Ser. A, 120, no. 2, 453-469 (2013).

\bibitem[D]{D} Duval, A.M. \emph{Algebraic shifting and 
sequentially Cohen-Macaulay simplicial complexes}, Electron. J.
  Combin. 3  (1996), no. 1, Research Paper 21

\bibitem[F1]{F1} S. Faridi, \emph{Simplicial trees are sequentially
  Cohen-Macaulay,} J. Pure and Applied Algebra, Volume 190, Issues
  1-3, Pages 121-136 (June 2004).

\bibitem[F2]{F2} S. Faridi, \emph{ Monomial ideals via square-free
  monomial ideals}, Lecture Notes in Pure and Applied Mathematics,
  volume 244, 85--114 (2005).




\bibitem[Fr]{Fr} R. Fr\"{o}berg, \emph{On Stanley-Reisner rings},
Topics in algebra, Part 2 (Warsaw, 1988), 57-70, Banach Center Publ.,
26, Part 2, PWN, Warsaw, 1990.

\bibitem[FV]{FV} C. Francisco, A. Van Tuyl, \emph{Sequentially
Cohen-Macaulay Edge Ideals,} Proceedings American Mathematical Society
135 (2007) 2327-2337.


\bibitem[HP]{HP} J. Herzog, D. Popescu, \emph{Finite filtrations of
  modules and shellable multicomplexes}, Manuscripta Math. 121 (2006),
  no. 3, 385-410.

\bibitem[KhM]{KhM} F. Khosh-Ahang, S. Moradi,
\emph{Regularity and projective dimension of edge ideal of $C_5$-free
vertex decomposable graphs},  Proc. Amer. Math. Soc., to appear.

\bibitem[Ku]{Ku} M. Kummini, \emph{Regularity, depth and arithmetic
  rank of bipartite edge ideals}, J. Algebraic Combin. 30, no. 4,
  429-445 (2009).

\bibitem[LM]{LM} K.N. Lin, P. Mantero, \emph{Projective Dimension of
  String and Cycle Hypergraphs,} arXiv:1309.7948 (2013).

\bibitem[MV]{MV} S. Morey, R.H. Villarreal, \emph{Edge Ideals:
  Algebraic and Combinatorial Properties,} Progress in Commutative
  Algebra: Ring Theory, Homology, and Decomposition, Publisher: de
  Gruyter.S, to appear (arXiv:1012.5329v3).

\bibitem[S]{S} R.P. Stanley,  \emph{Combinatorics and commutative
    algebra}, Second edition. Progress in Mathematics, 41. Birkhäuser
  Boston, Inc., Boston, MA, 1996. x+164 pp. ISBN: 0-8176-3836-9.

\bibitem[SKT]{SKT} S. Saeedi Madani, D. Kiani, N. Terai, 
\emph{Sequentially Cohen-Macaulay path ideals of cycles},
Bull. Math. Soc. Sci. Math. Roumanie (N.S.) 54(102) (2011), no. 4, 353-363.

\bibitem[W]{W} R. Woodroofe, \emph{Chordal and sequentially
  Cohen-Macaulay clutters,} Electron. J. Combin. 18(1), Paper 208,
  (2011).
\end{thebibliography}
\end{document}